\theoremstyle{plain}
\newtheorem{mainthm}{Theorem}
\newtheorem{thm}{Theorem}[section]
\newtheorem{lem}[thm]{Lemma}
\newtheorem{prop}[thm]{Proposition}
\newtheorem*{thm*}{Theorem}
\newtheorem{defi}[thm]{Definition}
\theoremstyle{definition}
\newtheorem{rem}[thm]{Remark}
\newcommand{\eqdef}{\stackrel{\scriptscriptstyle\rm def}{=}}
\setlist[enumerate,1]{label=(\arabic*)}
\setlist[enumerate,2]{label=(\alph*)}
\begin{document}
\title[Strange attractors]{Typical coexistence of infinitely many strange attractors
}

\begin{abstract}
We prove that the coexistence of infinitely  many prevalent
H\'enon-like phenomena is Kolmogorov typical in sectional
dissipative $C^{d,r}$-Berger domains of parameter families of
diffeomorphisms of dimension $m\geq 3$ for $d<r-1$. {Namely,}
we answer an old question posed by Colli in [\emph{Annales de l'Institut
Henri Poincare-Nonlinear Analysis}, 15, 539--580 (1998)] on
typicality \mbox{of the coexistence of infinitely many non-hyperbolic
strange attractors for $3\leq d<r-1$.}
\end{abstract}
\author[Barrientos]{Pablo G.~Barrientos}
\address{\centerline{Instituto de Matem\'atica e Estat\'istica, UFF}
    \centerline{Rua M\'ario Santos Braga s/n - Campus Valonguinhos, Niter\'oi,  Brazil}}
\email{pgbarrientos@id.uff.br}
\author[Rojas]{Juan David Rojas}
\address{\centerline{Universidad del Rosario,
Escuela de Ingenieria, Ciencia y Tecnologia}
    \centerline{Bogota,Colombia.}}
\email{juandavi.rojas@urosario.edu.co}

\maketitle  \thispagestyle{empty}
\section{Introduction}
Homoclinic bifurcations are one of the main mechanisms to create
complicated dynamical behavior in the evolution of parametric
families of discrete systems. A $C^r$-diffeomorphism has a
\emph{homoclinic tangency} if there is a pair of points $P$ and
$Q$ in the same transitive hyperbolic set such that the unstable
invariant manifold of ${P}$ and the stable invariant manifold of
$Q$ have a non-transverse intersection at a point $Y$. A
homoclinic tangency can be unfolded considering a $C^d$-family $(f_a)_a$ of $C^r$-diffeomorphisms parameterized by
$a\in \mathbb{I}^k$ with $f_{0}=f$, $\mathbb{I}=[-1,1]$, $k\geq 1$
and $d\leq r$. From the pioneering work of Newhouse~\cite{New70}, it
is well known that the set of $C^2$ surface diffeomorphisms
exhibiting homoclinic tangencies has a non-empty interior. See also
\cite{PV94,GST93a,Ro95,BD12,BR17} for higher dimensional dynamics.
In a \emph{Newhouse domain}, that is, in an open set of
diffeomorphisms (or in the corresponding parameters space) where the
dynamics with homoclinic tangencies associated with periodic
points are dense, generic diffeomorphisms exhibit coexistence of
infinitely many
sinks~\cite{New79,robinson1983bifurcation,GST93b,GST08}. This
result was coined under the name of \emph{Newhouse phenomenon}. But
also the unfolding of homoclinic tangencies in Newhouse domains
brings the presence and coexistence of more complicated chaotic
dynamics as (non-hyperbolic) strange attractors
(see~\cite{MV93,viana1993strange}). 
%
{A \emph{strange attractor} of a transformation $g$ is an invariant compact set $\Lambda$ whose stable set $W^s(\Lambda)=\{x: d(g^n(x),\Lambda)\to 0\}$
has a non-empty interior, and there is $z\in \Lambda$ with dense orbit (in $\Lambda$) 
 displaying exponential
growth of the derivative, that is, $\|Dg^n(z)\|\geq e^{cn}$ for all $n\geq 0$ and some $c>0$.}

Obviously, strange attractors are always non-trivial (i.e., they
are not reduced to a periodic orbit). However, they could be still
hyperbolic, as for instance the Plykin attractor or the Smale
solenoid. One of the first examples of non-hyperbolic strange
attractors was given (numerically) by H\'enon~\cite{henon1976two}
for the two-parameter family given by
\[
H_{a,b}(x,y)=(1-ax^2+y,bx).
\]
The limit family ($b=0$) is given by the quadratic maps
$T_{a}(x)=1-ax^{2}$. Benedicks and Carleson proved
in~\cite{benedicks1985iterations} that there exists a positive
Lebesgue measure set of parameters $a \in (1,2]$ such that the
compact interval $[1-a,1]$ is a strange attractor of $T_{a}$.
These results, see also \cite{jakobson1981absolutely}, were key to
prove the existence of strange attractors for the family of
H\'{e}non maps in~\cite{benedicks1991dynamics}. It was quickly
observed that an extension of conclusions
in~\cite{benedicks1991dynamics} could be possible for some other
family $F_{a,b}$ whose family of limit maps was also the quadratic
family~$T_{a}$. Families of this type will be called
\textit{H\'enon-like families}. Namely, Mora and Viana
in~\cite{MV93} and~\cite{viana1993strange} showed that any
H\'enon-like family of diffeomorphisms has a set of parameters
with positive Lebesgue measure for which a strange
attractor is exhibited. Such families appear in generic unfoldings of
homoclinic tangencies associated with \emph{sectional dissipative
periodic points}~\cite{PT93,viana1993strange,GST93b,GST08}, that
is, periodic points which have the product of any pair of
multipliers less than one in absolute value.

{Sometimes the non-hyperbolic strange attractors found in H\'enon-like families are called \emph{H\'enon-like strange attractors}.
The lack of hyperbolicity of H\'enon-like strange attractors} prevents stability under
perturbations, and thus, the classical arguments (see~\cite{PT93})
to provide coexistence of infinitely many of such attractors do
not work. This difficulty was overcome by
Colli~\cite{colli1998infinitely} and Leal~\cite{leal2008high} who
proved that, in Newhouse domains associated with homoclinic
tangencies to sectional dissipative periodic points, there exists
a  dense set of diffeomorphisms (or corresponding
parameters in the parameter space) exhibiting the coexistence of
infinitely many non-hyperbolic strange attractors. These results
say nothing about the persistence of the coexistence of infinitely
many attractors. Recall that a parametric family $f=(f_a)_a$ of
dynamics exhibits \emph{persistently} a property $\mathscr{P}$ if
$\mathscr{P}$ is observed for $f_a$ in a set $E(f)$ of parameter values
$a$ with a positive Lebesgue measure. It was an old open question
due to Colli (see~\cite[pg.~542]{colli1998infinitely}) whether for
"most" or are least there exists a $k$-parameter family
$(f_{a})_{a}$ of diffeomorphisms which exhibits persistently the property of coexistence of infinitely many strange
attractors. 
The abundance of such
families must be understood in the sense of typicality introduced
by Kolmogorov (see~\cite{hunt2010prevalence}). That is, a property
$\mathscr{P}$ is called \emph{typical} (in the sense of
Kolmogorov) if there is a Baire (local) generic set of parameter
families of dynamics exhibiting the property $\mathscr{P}$
persistently with full Lebesgue measure.  {See Definition~\ref{def:tipical}
for a more precise statement of this important notion.}
On the other hand, Palis
claimed that the measure of the set of parameters $E(f)$ where infinitely many attractors coexist is generically zero for
families $f=(f_{a})_{a}$ of one-dimensional dynamics and surface
diffeomorphism~\cite{PT93,palis2000global}.

Pumari\~no and Rodr\'iguez in~\cite[Thm.~B]{Puma01} (see
also~\cite{Puma97}) provided a first example of a non-generic
family of dynamical systems with persistent coexistence of
infinitely many non-hyperbolic strange attractors. Although Palis'
conjecture remains open, some advances in the opposite
direction have been made by Berger in~\cite{Ber16,Ber17} for
families of surface endomorphisms (in fact, local diffeomorphisms)
and higher dimensional diffeomorphisms. Namely, Berger constructed
open sets $\mathcal{U}$ of $k$-parameter families of the above
described dynamics in the $C^{d,r}$-topology  such that residually in these open sets any
family exhibits simultaneously infinitely many hyperbolic periodic
attractors (sinks) \emph{for all} parameter value. {See~\S\ref{topology}
to see the definition of the $C^{d,r}$-topology.}
Mimicking previously introduced terminology, this result was coined
in~\cite{BR21} under the name of \emph{Berger phenomenon} and the
open sets $\mathcal{U}$ as \emph{Berger domains}. In fact,
in~\cite{BR21}, Raibekas and the first author of this work provide
the following more specific definition of such domains:

\begin{defi} \label{def:Berger-domain}
An open set $\mathscr{U}$ of $k$-parameter $C^d$-families of
$C^r$-diffeomorphisms is called \emph{$C^{d,r}$-Berger domain}
if there exists a dense subset
$\mathscr{D}$ of $\mathscr{U}$ such that for any $f=(f_a)_a \in
\mathscr{D}$ there is a covering of $\mathbb{I}^k$ by open balls
$J_i$
where 
$f_a$ has a homoclinic tangency $Y^i_{a}$
 associated with a
hyperbolic periodic point $Q^i_a$  depending $C^d$-continuously on the
parameter $a$ in $J_i$.
%
\end{defi}

{Roughly speaking, a Berger domain is the equivalent of a Newhouse domain for parametric families. {As in a Newhouse domain (free parameter case), a Berger domain has a dense subset of families $f=(f_a)_a$ where each diffeomorphism $f_a$ in the family exhibits a homoclinic tangency $Y_a$.
But now, for every $a_0$, the tangency $Y_{a_0}$ of $f_{a_0}$ is unfolding degenerately\footnote{The notion of \emph{degenerate unfolding} is introduced in~\cite[Def.~3.8]{Ber16} under the name of \emph{paratangency}. For a more precise definition and contextualization see~\cite[Sec.~1.4]{BR20}.} inside the family $f=(f_a)_a$ in the sense that persists (as a homoclinic tangency) along an open set of parameters $J$ containing $a_0$, i.e., varies continuously with respect to $a$ in $J$.}  For more details and a deeper description of the notion of Berger domain, see~\cite[Sec.~1.2]{BR21}.}


{In~\cite{Ber16,Ber17},  Berger} constructed $C^{d,r}$-Berger
domains of persistent homoclinic tangencies associated with
sectional dissipative periodic points {$Q^i_a$} with $d\leq r$ for
endomorphisms in dimension two and diffeomorphisms in higher
dimension. New and different examples of $C^{d,r}$-Berger domains
of $k$-parametric families of diffeomorphisms in dimension $m\geq
3$ for $d<r-1$ were obtained also in~\cite{BR21} as a consequence
of~\cite{BR20}.  The
coexistence of infinitely many smooth attracting invariant circles
was shown also $C^{d,r}$-Kolmogorov typical {in these new examples of Berger domains.}  In this paper, we
will answer the previously mentioned question posed by Colli by showing that the
coexistence of infinitely many non-hyperbolic strange attractors
is typical~{for $k$-parametric families} in the sense of Kolmogorov {in sectional dissipative Berger domains}:

\begin{mainthm} \label{mainthmA}
Let $\mathcal{U}$ be a $C^{d,r}$-Berger domain of $k$-parameter
families of diffeomorphisms of dimension $m\geq 3$ with $3\leq d<
r-1$ and $k\geq 1$ associated with sectional dissipative periodic
points. Then there exists a residual set $\mathcal{R}$ of
$\mathcal{U}$ such that for every family $(f_a)_a \in \mathcal{R}$
the diffeomorphism $f_a$ exhibits infinitely many non-hyperbolic
strange attractors for Lebesgue almost every $a\in \mathbb{I}^k$.
\end{mainthm}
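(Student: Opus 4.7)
The plan is to realize the residual set $\mathcal{R}$ as a countable intersection of open dense subsets of $\mathcal{U}$. For each pair of positive integers $n$ and $\ell$, let $\mathcal{R}_{n,\ell}$ be the set of families $(f_a)_a\in\mathcal{U}$ for which the Lebesgue measure of
\[
\{\,a\in\mathbb{I}^k : f_a \text{ has at least } n \text{ pairwise disjoint non-hyperbolic strange attractors}\,\}
\]
exceeds $1-1/\ell$. If each $\mathcal{R}_{n,\ell}$ is open and dense, then $\mathcal{R}=\bigcap_{n,\ell}\mathcal{R}_{n,\ell}$ is residual and, by countable additivity, every family in $\mathcal{R}$ exhibits infinitely many non-hyperbolic strange attractors for Lebesgue almost every parameter, yielding the theorem.

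For openness of $\mathcal{R}_{n,\ell}$, I would exploit that the Mora--Viana mechanism producing H\'enon-like strange attractors is driven by $C^2$-robust H\'enon-like renormalization schemes at returns near a sectional dissipative tangency. Since $d\geq 3$, small $C^{d,r}$-perturbations preserve each of $n$ disjoint such renormalization structures, and the standard lower semi-continuity (in the $C^{d,r}$-topology) of the Lebesgue measure of Mora--Viana good-parameter sets implies that the measure estimate defining $\mathcal{R}_{n,\ell}$ persists in a $C^{d,r}$-neighborhood.

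For density of $\mathcal{R}_{n,\ell}$, I would begin with a family $(f_a)_a\in\mathscr{D}\subset\mathcal{U}$ furnished by Definition~\ref{def:Berger-domain}, so that a covering $\{J_i\}$ of $\mathbb{I}^k$ carries persistent homoclinic tangencies $Y^i_a$ associated with sectional dissipative periodic points $Q^i_a$. By a perturbation supported near heteroclinic connections of the thick horseshoes generated by these periodic orbits, I would produce a $C^{d,r}$-approximation carrying $n$ \emph{simultaneously} unfolding persistent tangencies to $n$ distinct sectional dissipative periodic points localized in disjoint regions of phase space. Applying the parametric Mora--Viana theorem to the H\'enon-like family attached to each renormalization yields, for each tangency, a set of good parameters of positive Lebesgue density at the corresponding tangency locus; disjointness of the renormalization domains makes the resulting strange attractors coexist on the intersection of these $n$ sets.

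The main obstacle is that Mora--Viana supplies only positive density at each tangency, not density close to one, so the naive intersection of $n$ such good sets need not have measure above $1-1/\ell$. To overcome this I would follow the multi-scale strategy of Colli and Leal: inside every $J_i$ the Newhouse proliferation of tangencies yields infinitely many nested renormalization scales, and at sufficiently deep scales the Mora--Viana density of good parameters becomes uniform on arbitrarily small sub-balls. A Vitali-type covering argument, combined with the Lebesgue density theorem, then shows that almost every $a\in\mathbb{I}^k$ lies simultaneously in a good-parameter set for $n$ renormalizations attached to $n$ distinct periodic points, upgrading the positive density to measure above $1-1/\ell$ and completing the density step.
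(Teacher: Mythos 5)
Your overall architecture (residual set as a countable intersection of open dense sets, Mora--Viana prevalence, Lebesgue density points) is in the right spirit, but the specific decomposition has two genuine gaps. First, openness of $\mathcal{R}_{n,\ell}$ fails as stated: the defining condition refers directly to the set of parameters at which $f_a$ exhibits non-hyperbolic strange attractors, and there is no known lower semi-continuity (in measure) of that set under $C^{d,r}$-perturbations of the family --- indeed the non-robustness of H\'enon-like attractors is the central difficulty of the whole problem. What Mora--Viana/Viana actually gives, and what the paper uses, is a \emph{uniform lower bound} $c>0$ for the measure of the good-parameter set over a whole $C^s$-neighborhood of the parabola family; this is not the same as semi-continuity of the measure near a given family. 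The paper avoids the issue by making the open dense sets $\mathcal{O}_m(\rho)$ depend only on the condition of being a $\rho$-$C^s$-H\'enon-like family after renormalization of period $n_\ell$ in prescribed balls $I_{\ell,j}$ --- a condition on the renormalized return maps that \emph{is} open in the $C^{d,r}$-topology --- and it only invokes the existence of attractors at the very end, for families already in the residual intersection.

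Second, your density step needs the measure of the $n$-attractor parameter set to exceed $1-1/\ell$, and the proposed ``upgrade'' from the fixed Mora--Viana proportion $c$ to measure close to one at a single perturbation is not substantiated: the prevalence constant $c$ is a fixed (typically small) number, uniform but not improvable by going to deeper renormalization scales, so no finite perturbation produces a good set of proportion near $1$ in a robust way. The paper never needs such an estimate. Instead, each family in the residual set carries infinitely many renormalization scales $n_\ell$ with covers $\{I_{\ell,j}\}_j$ of mesh $\alpha_\ell\to 0$, each contributing a good set $J^*_\ell$ occupying proportion at least $c/6$ of every small ball around any Lebesgue density point of any positive-measure set $A$; hence $\sum_\ell |A\cap J^*_\ell|=\infty$ for every such $A$, and a generalized second Borel--Cantelli lemma yields that $\limsup_\ell J^*_\ell$ has full measure. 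Distinctness of the infinitely many attractors then comes from the geometric independence of the return boxes $\sigma_n$ for different periods (Remark~\ref{rem:geometrically-independence}), not from $n$ distinct periodic points placed in disjoint regions of phase space as you propose. To repair your argument you would essentially have to replace the defining condition of $\mathcal{R}_{n,\ell}$ by the open renormalization condition and move the measure-theoretic conclusion to the residual set, which is exactly the paper's route.
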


The proof of the above theorem is strongly based on the fact that the coexistence of non-hyperbolic strange attractors is a prevalent phenomenon for H\'enon-like families.
To be more precise, we need some definitions.

Let us denote by $\Phi=(\Phi_M)_{M}$ the parabola
family given by
\begin{equation} \label{eq:parabola-family}
\Phi_M(x,y)=(0,M-y^2) \quad \text{where \ \ $(x,y)\in
[-3,3]^{m-1}\times [-3,3]$ \ \ and \ \ $M\in [1,2]$}.
\end{equation}
A phenomenon $\mathscr{P}$ is a fact or property that is observed to exist or to occur in a dynamics.  Examples of phenomena for the quadratic map are the existence of sinks, saddle-nodes, flip bifurcation, homoclinic tangencies, and strange attractors among others.

\begin{defi} \label{def:prevalent-property}
A phenomenon $\mathscr{P}$ of a dynamics is said to be
\emph{$C^s$-prevalent for H\'enon-like families} if there {exists
 $0<c\leq 1$} such that any $C^{d,r}$-family
$\varphi=(\varphi_M)_M$ of diffeomorphisms with $s\leq d \leq r$
which is {sufficiently} $C^s$-close to the parabola family
$\Phi=(\Phi_M)_M$ satisfies $\mathscr{P}$ for any $M$ in a subset
of parameters of $(1,2)$ with Lebesgue measure at least $c$.
\end{defi}

An example of $C^s$-prevalent phenomenon for H\'enon-like families
is the existence of hyperbolic attractors~\cite{PT93,GST08} (with $s=2$).
As explained in~\cite[Sec.~5]{colli1998infinitely}, \cite[Sec.~3.2]{leal2008high} and notify in~\cite[p.~52]{DRV:96}, it follows
from the proof of the results in~\cite{MV93,viana1993strange} that the existence of non-hyperbolic strange
attractors is also a $C^s$-prevalent phenomenon for H\'enon-like families (with $s=3$). {However, in this case, we need to be careful with the meaning of sufficiently close to the parabola family (see conditions~(QL) and (SD) in~\cite{viana1993strange}\footnote{The referee pointed out that similarly to Mora-Viana's proof, Viana's proof needed one extra assumption on  distorsion bound of the determinant of the family. This property seems nonetheless not necessary in view of the
alternative proof of~\cite{berger2019strong}}). The family $\varphi$ in the above definition needs to belong to an open set of $C^3$-perturbations of the family of parabolas, which occur in the renormalization scheme of the unfolding of some homoclinic tangencies.}




Many of the ideas in this paper come from the Ph.D.~thesis of
the second author~\cite{R17} where similar results were obtained in
the case of endomorphisms on surfaces. Note that the main techniques used here to prove Theorem~\ref{mainthmA} are essentially different from those introduced by Berger in~\cite{Ber16,Ber17}. To describe the differences, let us first explain briefly
the strategy developed by Berger to get his results.

As we mentioned earlier, a family of diffeomorphisms $f=(f_a)_a$ that unfolds a homoclinic tangency of $f_0$   exhibits many different phenomena. Some of these phenomena are robust in the sense that they appear for open sets of parameters arbitrarily close to $a=0$. For instance, we have the existence of (hyperbolic) sinks. But they are quickly destroyed by a small perturbation of the dynamics. In other words, the lifetime of these sinks (the size of the open set of parameters where there exists a well-defined continuation) is very small. The persistent homoclinic tangency $Y^i_a$ associated with the saddle $Q^i_a$ that appears in Definition~\ref{def:Berger-domain} allows Berger, by means of an arbitrarily small perturbation of the family $f$, to create a sink with a large timelife. Indeed, the created sink has a well-defined continuation for all parameters $a\in J_i$. Since a sink is robust (i.e.,~it has a continuation for
nearby systems), it is obtained that any family sufficiently close has a sink for all parameters $a\in J_i$. Now, Berger's phenomenon is proved by well-known arguments used to prove Newhouse's phenomenon. 

To address a similar result but involving non-hyperbolic phenomena (as H\'enon-like attractors) we have to tackle the fact that these phenomena are not robust. In this direction, we replace the notion of topological robustness by the notion of prevalent for H\'enon-like families. By introducing a new parameter $\mu$, we consider a family $g=(g_{a,\mu})_{a,\mu}$ which unfolds the homoclinic tangency $Y^i_a$ of $f_a$ generically with respect to $\mu$ and such that $g_{a,0}=f_a$.  With the help of the rescaling lemmas in~\cite{GST08}, we can get a curve $C_n=\{(a,\mu_n(a))\}$ in the $(a,\mu)$-parameter space arbitrarily close to $\{(a,0)\}$ such that $g|_{C_n}=(g_{a,\mu_n(a)})_a$ is a family close to $f$ and it is also a H\'enon-like family (after renormalization in $J_i$). Now, the prevalence allows us to conclude Theorem~\ref{mainthmA} similarly as Berger did to obtain his result.

In the next section, we
introduce formally the set of families of diffeomorphisms that we
are considering and the $C^{d,r}$-topology. After that, we will
prove Theorem~\ref{mainthmA}.

\section{Typical coexistence of infinitely many prevalent phenomena}

\subsection{Topology of families of diffeomorphisms}
\label{topology} We introduce the topology of the set of families.
To do this, set $\mathbb{I}=[-1,1]$.  Given $0<d \leq r\leq
\infty$, $k\geq 1$ and a compact manifolds $\mathcal{M}$, we
denote by $C^{d,r}(\mathbb{I}^k,\mathcal{M})$ the space of
$k$-parameter $C^d$-families $f=(f_a)_a$ of $C^r$-diffeomorphisms
$f_a$ of $\mathcal{M}$ parameterized by $a$ in an open
neighborhood of $\mathbb{I}^k$ such that
\begin{equation*}
  \partial^i_a \partial^j_x f_a(x) \ \ \text{exists continuously
  for all $0\leq i\leq d$, \ \   $0\leq i+j\leq r$  \ \  and \ \
   $(a,x)\in \mathbb{I}^k\times \mathcal{M}.$}
\end{equation*}
We endow this space with the topology given by the $C^{d,r}$-norm
given by
$$
\|f\|_{{C}^{d,r}}=\max\{\sup \|\partial^i_a\partial_x^j f_a (x)\|:
\, 0\leq i \leq d, \  0\leq i + j \leq r\} \quad \text{where \
$f=(f_a)_a \in {C}^{d,r}(\mathbb{I}^k,\mathcal{M})$.}
$$
If $d = r$, we will say that the family is of class $C^r$. Note
that a family $f=(f_a)_a$ is of class $C^r$, if and only if the
map $(a, x) \mapsto f_a(x)$ is of class $C^r$.

\begin{defi} \label{def:tipical} Fix $0<d \leq r\leq
\infty$, $k\geq 1$ and {an open set $\mathcal{U}$ of $C^{d,r}(\mathbb{I}^k,\mathcal{M})$.}
A property
$\mathscr{P}$ is said to be {\emph{$C^{d,r}$-Kolmogorov typical in $\mathcal{U}$}}  if there is a residual set $\mathcal{R}$ of $\mathcal{U}$ such that for every $f=(f_a)_a\in \mathcal{R}$  there is a set $E$ of full Lebesgue measure in~$\mathbb{I}^k$ such that $f_a$ exhibits the property $\mathscr{P}$ for all $a\in E$.
\end{defi}
{
Sometimes, to emphasize the notion of locally genericity without mentioning the open set~$\mathcal{U}$, $\mathscr{P}$ is simply said to be  \emph{locally Kolmogorov typical} in parametric families.
}
\subsection{Proof of Theorem~\ref{mainthmA}} From now on, fix a manifold $\mathcal{M}$ of dimension $m\geq 3$, $k\geq 1$ and \mbox{{$0<3\leq s\leq d<r-1$.}}
Consider a Berger domain $\mathcal{U}\subset
C^{d,r}(\mathbb{I}^k,\mathcal{M})$  associated with
sectional dissipative hyperbolic periodic points. Let
$\mathscr{P}$ be the property "existence of a non-hyperbolic strange attractor"\footnote{The proof also works for other prevalent phenomenon, such as the
 existence of a hyperbolic attractor.}.
 We will prove that the
coexistence of infinitely many phenomena $\mathscr{P}$ is
Kolmogorov typical in $\mathcal{U}$. First, we need to introduce an
important definition:
\begin{defi} \label{def:Henon-like-after}
Let $f=(f_a)_a$ be a family in
$C^{d,r}(\mathbb{I}^k,\mathcal{M})$  and fix
$\alpha<\beta$, $n\geq 1$ {and $\rho>0$.} The family $f$ is said to be a
\emph{$\rho$-$C^s$-H\'enon-like family after renormalization of period
$n$ in $I=(\alpha,\beta)^k$} if, for each $\bar{a}\in
[\alpha,\beta]^{k-1}$, there is a one-parameter family
$R_{\bar{a}}=(R_{\bar{a},b})_b$ of smooth transformations
$R_{\bar{a},b}:[-3,3]^m\to \mathcal{M}$ with $b\in [\alpha,\beta]$
such that the family $F=(F_{M})_M$ given by
$$
   F^{}_{M}\eqdef R_{a(M)}^{-1}\circ f^n_{a(M)} \circ R^{}_{a(M)}
$$
where
$$
a(M)=(\bar{a},b(M))\in I
 \ \ \text{with} \ \
b(M)=(\beta-\alpha)M+2\alpha-\beta \ \ \text{for} \ \ M\in [1,2]
$$
is $\rho$-$C^s$-close to the parabola family $\Phi=(\Phi_M)_M$
given in~\eqref{eq:parabola-family}.
\end{defi}

We observe that to be a $\rho$-$C^s$-H\'enon-like family  after
renormalization of period $n$ in $I$ is an open property in the
$C^{d,r}$-topology of parametric families.

{
\begin{rem} \label{rem0} Here we will explain in more detail the prevalence of $\mathscr{P}$ that is required.
We will say that $f=(f_a)_a \in C^{d,r}(\mathbb{I}^k,\mathcal{M})$ is a \emph{$C^s$-H\'enon-like family after renormalization in $I\subset \mathbb{I}^k$}
if there are  $\rho_\ell\to 0^+$ and $n_\ell \to \infty$ such that $f$ is a $\rho_\ell$-$C^s$-H\'enon-like family after renormalization of period
$n_\ell$ in $I$ for all $\ell\geq 1$.  Such a family is obtained by a renormalization scheme in the unfolding of some homoclinic tangencies.
In particular, the renormalized families $F_{\ell}=(F_{\ell,M})_M$ converge to $\Phi$ as $\ell\to \infty$ in the $C^s$-topology. Therefore, for $\ell$ large enough, they are quadratic-like families as defined in~\cite[see~(QL) and p.~21]{viana1993strange} and thus $F_{\ell}$ has a strange attractor for a positive Lebesgue measure set $J_\ell \subset (1,2)$ of parameters~$M$.\footnote{Smooth linearization near the saddle is not necessary for this conclusion. See~\cite{Ro95} and~\cite{GST08} where the linearizability conditions were removed in the development of the renormalization scheme in the unfolding of generic homoclinic tangencies.} 
Moreover, paraphrasing~\cite[pp.~52 and 54]{DRV:96}, actually the proof in~\cite{viana1993strange} provides a uniform lower bound for the measure of theses sets $J_\ell$ on a neighborhood of the family $\Phi$. We conclude that there are $c_0>0$ and $\ell_0\geq 1$ such that for every $\ell\geq \ell_0$, the set $J_\ell$  has Lebesgue measure $|J_\ell|\geq c_0$. Then $f^{n_{\ell}}_a$ has a strange attractor for any $a$ in a subset $J^*_\ell \subset I$ of parameters with
Lebesgue measure at least $ c\cdot |I|$ for some $c>0$ independent of $\ell$. Compare with~\cite[Sec.~5]{colli1998infinitely} and \cite[Sec.~3.2]{leal2008high}.
Therefore, following Definition~\ref{def:prevalent-property} and in order to emphasize the parameters $c>0$, we will say that $\mathscr{P}$ is a  \emph{$c$-prevalent phenomenon for $C^s$-H\'enon-like families after renormalization}.
\end{rem}
}

%
%
%

Recall that $\mathcal{D}$ denotes the dense set in $\mathcal{U}$
provided by Definition~\ref{def:Berger-domain}.  We also assume that the period of the periodic points $Q_a^i$ that appears in this definition is equal to one. This does not affect the argument of the proof, and it will suppose a considerable simplification in the notation and the statement of the next results. Also, to simplify notation, we will refer to
the Cartesian product of $k$ open intervals with the same length
as an \emph{open ball} (in the supremum norm in $\mathbb{R}^k$).

\begin{prop} \label{main-lema}
For any $f=(f_a)_a  \in \mathcal{D}$
one finds $\alpha_0=\alpha_0(f)>0$ such that for every $\epsilon>0$, $\rho>0$ and $0<\alpha \leq \alpha_0$, there are $n_0=n_0(\epsilon,\rho,f,\alpha)\in \mathbb{N}$ and a finite collection $\{I_j\}_j$ of pairwise disjoint open balls $I_j=I_j(f,\alpha)$  of $\mathbb{I}^k$
with $$|I_j| \leq \alpha |\mathbb{I}^k|, \quad |\mathbb{I}^k\setminus \cup_j I_j |\leq  \alpha  |\mathbb{I}^k|$$
 and the
following property:

for every $n \geq n_0$ there is an $\epsilon$-close family
$g=(g_a)_a$ to $f=(f_a)_a$ in the $C^{d,r}$-topology such that $g$
is a $\rho$-$C^s$-H\'enon-like family after renormalization of
period $n$ in any $I_j$.
\end{prop}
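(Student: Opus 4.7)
The plan is to exploit the persistent homoclinic tangencies that characterize a Berger domain and, as sketched in the introduction, introduce an auxiliary unfolding parameter by an arbitrarily small localized perturbation, then extract a one-parameter curve in the enlarged parameter space along which the renormalization of a high iterate of the perturbed diffeomorphism is close to the parabola family $\Phi$. The role of the renormalization scheme for generic unfoldings of homoclinic tangencies at sectional dissipative saddles (see \cite{GST08}, with the refinements of \cite{Ro95} that avoid smooth linearization) is central, and the regularity hypothesis $3\leq d<r-1$ will be used precisely to invoke its $C^{d,r}$-version.

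First I would partition $\mathbb{I}^k$. By Definition~\ref{def:Berger-domain} applied to $f\in\mathcal{D}$, there is a covering of $\mathbb{I}^k$ by open balls $J_i$ on which $f_a$ has a homoclinic tangency $Y^i_a$ at a sectional dissipative saddle $Q^i_a$, both depending $C^d$-continuously on $a$; extract a finite subcover $J_1,\dots,J_N$ by compactness. Choose $\alpha_0=\alpha_0(f)>0$ so small that, for any $\alpha\leq\alpha_0$, a cubical grid of $\mathbb{I}^k$ with cells of volume $\alpha|\mathbb{I}^k|$ produces a subcollection $\{I_j\}_j$ of cells each entirely contained in some $J_{i(j)}$, with $|\mathbb{I}^k\setminus\bigcup_j I_j|\leq\alpha|\mathbb{I}^k|$ (the exceptional set lies in an $O(\alpha^{1/k})$ tubular neighborhood of $\partial\bigcup_i J_i$).

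Second, inside each $I_j\subset J_{i(j)}$ I would perform a localized perturbation and renormalize. Pick a $C^\infty$ bump $\phi_j$ in parameter space, supported in $I_j$ and identically $1$ on a concentric subball $I_j'$, together with a phase-space bump $\chi_j$ concentrated near the tangency orbit of $Y^{i(j)}_a$ chosen so that the $(k+1)$-parameter family $g^{(j)}_{a,\mu}\eqdef f_a+\mu\,\phi_j(a)\,\chi_j$ unfolds $Y^{i(j)}_a$ generically with respect to $\mu$ for $a\in I_j'$. The rescaling lemma in \cite{GST08,Ro95} then supplies $C^d$-families of affine coordinate changes $R_{j,n,a}$ and parameter windows $[\mu^-_{j,n}(a),\mu^+_{j,n}(a)]$ with $\mu^\pm_{j,n}(a)\to 0$ exponentially in $n$ (uniformly in $a\in I_j'$), such that the renormalization $R_{j,n,a}^{-1}\circ(g^{(j)}_{a,\mu})^n\circ R_{j,n,a}$, after the linear reparameterization of the window to $M\in[1,2]$, converges in $C^s$ to $\Phi=(\Phi_M)_M$. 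Finally I would assemble the global perturbation: choose $n_0=n_0(\epsilon,\rho,f,\alpha)$ so large that for every $n\geq n_0$ the renormalized family is $\rho$-$C^s$-close to $\Phi$ simultaneously on each $I_j$ (possible since $N<\infty$), and the midpoint curve $\mu_{j,n}(a)\eqdef\tfrac12(\mu^-_{j,n}(a)+\mu^+_{j,n}(a))$ satisfies $\|\mu_{j,n}\,\phi_j\,\chi_j\|_{C^{d,r}}<\epsilon/N$. This is achievable because $\mu^\pm_{j,n}$ decays exponentially in $n$ (sectional dissipativity providing the contraction exponent), while $\|\phi_j\|_{C^d}$ and $\|\chi_j\|_{C^r}$ remain constants once $\alpha$ is fixed. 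Define
\[
g_a(x)\eqdef f_a(x)+\sum_j \mu_{j,n}(a)\,\phi_j(a)\,\chi_j(x).
\]
Since the $\phi_j$'s have pairwise disjoint supports in parameter space, $\|g-f\|_{C^{d,r}}<\epsilon$; and on each $I_j$ one has $g_a=g^{(j)}_{a,\mu_{j,n}(a)}$, whose period-$n$ renormalization by $R_{\bar a,b}\eqdef R_{j,n,(\bar a,b)}$ is $\rho$-$C^s$-close to $\Phi$, as required by Definition~\ref{def:Henon-like-after}.

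The main obstacle is to produce renormalizations $R_{j,n,a}$ of class $C^d$ in the parameter with the uniform quantitative control needed to absorb the derivative blow-up of $\phi_j$ on a ball of side $\sim\alpha^{1/k}$, since smooth linearization of the saddle is generally unavailable. This is exactly where the hypothesis $d<r-1$ enters: it allows one to invoke the version of the rescaling lemma valid in the $C^{d,r}$-category, so that both the affine changes of coordinates and the curves $\mu^\pm_{j,n}$ enjoy the required $C^d$-regularity in $a$, and the exponentially small estimate on $\mu^\pm_{j,n}$ dominates the (fixed in $n$) norms of the perturbation bumps.
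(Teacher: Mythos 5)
Your overall architecture (finite subcover from Definition~\ref{def:Berger-domain}, small disjoint parameter balls each sitting inside one covering ball, a localized auxiliary unfolding parameter $\mu$, the rescaling lemma of~\cite{GST08}, and an exponentially small curve $\mu_{j,n}(a)$ substituted back to eliminate the extra parameter) matches the paper's, which routes the per-ball step through a separate lemma (Lemma~\ref{lema2}). But there is a genuine gap at the decisive point: the choice of the \emph{midpoint} curve $\mu_{j,n}(a)=\tfrac12\bigl(\mu^-_{j,n}(a)+\mu^+_{j,n}(a)\bigr)$ does not produce a H\'enon-like family after renormalization in the sense of Definition~\ref{def:Henon-like-after}. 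That definition requires the rescaled parameter of $R^{-1}_{a(M)}\circ g^n_{a(M)}\circ R_{a(M)}$ to run affinely over $[1,2]$ as the \emph{last coordinate} $b=\pi_k(a)$ of the original parameter runs over the ball, i.e.\ the renormalized family must be $C^s$-close to the whole family $(\Phi_M)_M$ with $M$ genuinely varying. Along the midpoint of the $\mu$-window the rescaled parameter is, up to $o(1)$, \emph{constant} (the middle of its range) for every $a$; the renormalized maps are then all close to a single parabola $\Phi_{M_0}$, and no phase-space coordinate change $R_{\bar a,b}$ can repair this, since $\Phi_{M_0}$ and $\Phi_M$ have different dynamics for $M\neq M_0$. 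The missing idea is the one the paper implements: since $\partial_\mu M_{n,a}\sim\gamma_a^{2n}\gg1$, one solves the implicit equation $M_{n,a}(\mu_n(a))=\widehat M(\pi_k(a))$, where $\widehat M$ is the inverse of the affine reparameterization $b(M)$ of Definition~\ref{def:Henon-like-after}, and sets $g_{n,a}=g_{a,\mu_n(a)}$ along that graph, so that the rescaled parameter is forced to be the prescribed affine function of $\pi_k(a)$.

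Two further points. First, smallness of the perturbation in the $C^{d,r}$-norm is not only about exponential decay of the values of $\mu_{j,n}$: you must control $\partial_a^{\ell}\mu_n(a)$ up to order $d$ (the paper proves $\partial_a^{\ell}\mu_n(a)=O(n^{|\ell|}\gamma_a^{-n})$ by induction on $|\ell|$, using the explicit form of $M_{n,a}$ and its derivative bounds), and these estimates must beat the $\alpha^{-d}$ blow-up of the bump derivatives; you assert this but give no argument, and with the correct (non-midpoint) curve the dependence of $\mu_n$ on $a$ is nontrivial. Second, your parameter bump is supported in $I_j$ and equal to $1$ only on a smaller concentric ball, so your construction yields the renormalization only on that smaller ball, while the statement and your measure estimates concern $I_j$; the paper resolves this by making the bump identically $1$ on $I_j$ and supported on a slightly larger ball $I'_j\supset I_j$, taking the \emph{outer} balls pairwise disjoint with full-measure union and checking that the inner balls still leave out measure at most $\alpha|\mathbb{I}^k|$. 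This last point is bookkeeping, but the midpoint issue and the derivative estimates are substantive.
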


\begin{rem} \label{rem:geometrically-independence} By construction, the renormalization after period $n$ is done on the first return map $T_n=f_a^n$ associated with a homoclinic tangency of $f_a$ in Definition~\ref{def:Berger-domain}. Actually, $T_n$ is defined on a box $\sigma_n=\sigma_n(a)$ {as the first return to a tower of boxes $\cup_{n'} \sigma_{n'}$   with $\sigma_n\cap \sigma_{n'}=\emptyset$ if $n\not = n'$.} Thus, the phenomena $\mathscr{P}$ after renormalization of periods $n$ and $n'$ with $n\not=n'$ are geometrically independent. That is, if  $g_a$ has a phenomenon $\mathscr{P}$ after renormalization of period $n$ and $n'$ with $n\not = n'$  where $g=(g_a)_a$ is provided in Proposition~\ref{main-lema}, then we have a pair of periodic attractors $\Lambda_a$ and $\Lambda'_a$  of $g_a$ of minimal period, i.e., fixed attractors  of $T_n$ and~$T_{n'}$. In particular, their orbits are pairwise disjointed, and the attractors are distinct.
\end{rem}

Before proving the above proposition, we will conclude
Theorem~\ref{mainthmA}:

\begin{thm} For every $m\in \mathbb{N}$ and $\rho>0$, there exists an open and dense set
$\mathcal{O}_m=\mathcal{O}_m(\rho)$ in $\mathcal{U}$ such that it
holds the following:
\begin{enumerate}[label={}, rightmargin=2em, leftmargin=2em]
\item For any family $g=(g_a)_a$ in $\mathcal{O}_m$ and each
$\ell=1,\dots,m$, there exist $\alpha_\ell=2^{-\ell}\alpha_0>0$ (with $\alpha_0=\alpha_0(g)$),
 a positive integer $n_\ell$ (with
$n_1<\dots<n_m$) and a finite collection $\{I_{\ell,j}\}_j$ of pairwise disjoint open balls $I_{\ell,j}$ of $\mathbb{I}^k$ with
$|I_{\ell,j}| \leq \alpha_\ell |\mathbb{I}^k|$ and $|\mathbb{I}^k\setminus \cup_j I_{\ell,j}| \leq \alpha_\ell |\mathbb{I}^k|$
such that $g$ is a
$\rho$-$C^s$-H\'enon-like family after renormalization of period
$n_\ell$ in any  $I_{\ell,j}$. 
\end{enumerate}
\noindent Moreover, there is a residual subset $\mathcal{R}$ of
$\mathcal{U}$ such that any family $g=(g_a)_a \in \mathcal{R}$
satisfies that $g_a$ has the coexistence of infinitely many
phenomenon $\mathscr{P}$ for Lebesgue almost every $a\in
\mathbb{I}^k$.
\end{thm}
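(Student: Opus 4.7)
The plan is to build $\mathcal{O}_m(\rho)$ by an inductive perturbation using Proposition~\ref{main-lema}, and then to extract the Lebesgue a.e.\ coexistence on $\mathcal{R}=\bigcap_m \mathcal{O}_m$ via a Lebesgue density argument that exploits the shrinking scales $\alpha_\ell=2^{-\ell}\alpha_0$ together with the prevalence estimate of Remark~\ref{rem0}.

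I would define $\mathcal{O}_m(\rho)$ as the set of $g\in\mathcal{U}$ admitting $m$ periods $n_1<\dots<n_m$ with $n_m\ge m$ and, for each $\ell\le m$, a pairwise disjoint collection $\{I_{\ell,j}\}_j$ of open balls of $\mathbb{I}^k$ satisfying the stated size/covering constraints, such that $g$ is a $\rho$-$C^s$-H\'enon-like family after renormalization of period $n_\ell$ in every $I_{\ell,j}$. Openness in the $C^{d,r}$-topology is the remark immediately after Definition~\ref{def:Henon-like-after}. For density, given $g\in\mathcal{U}$ and $\epsilon>0$, I would proceed inductively: choose $f\in\mathcal{D}$ with $\|f-g\|<\epsilon/2$, fix a small $\alpha_0>0$, and build $g_0=f,g_1,\dots,g_m$ where $g_\ell$ is obtained by applying Proposition~\ref{main-lema} to some $f_\ell\in\mathcal{D}$ close to $g_{\ell-1}$ with tolerance $\epsilon/2^{\ell+1}$, parameter $\alpha_\ell=2^{-\ell}\alpha_0$, and period $n_\ell$ above both $\max(\ell,n_{\ell-1})$ and the threshold supplied by the proposition. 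Shrinking each perturbation below the positive openness radius of the H\'enon-like property already installed at previous levels preserves those structures, so after $m$ steps one reaches $g_m\in \mathcal{O}_m(\rho)$ within $\epsilon$ of $g$. Choosing $\rho=\rho_0$ small enough for Remark~\ref{rem0} to yield the uniform prevalence constant $c>0$, I set $\mathcal{R}=\bigcap_m \mathcal{O}_m(\rho_0)$, which is residual in $\mathcal{U}$.

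For the almost-everywhere statement, fix $g\in\mathcal{R}$ and write $A_n(g)=\{a:g_a\text{ exhibits }\mathscr{P}\text{ at period }n\}$. At level $m$ in $\mathcal{O}_m$, the prevalence applied inside every $I_{m,j}^{(m)}$ gives $|A_{n_m^{(m)}}(g)\cap I_{m,j}^{(m)}|\ge c\,|I_{m,j}^{(m)}|$. Let $B_N(g)=\{a:g_a\text{ exhibits }\mathscr{P}\text{ at no period }>N\}$ and suppose for contradiction that $|B_N(g)|>0$. Since $\sum_m 2^{-m}\alpha_0<\infty$, the Borel-Cantelli lemma applied to the complements $\mathbb{I}^k\setminus\bigcup_j I_{m,j}^{(m)}$ shows that for a.e.\ $a$ (and hence for a Lebesgue density point of $B_N(g)$, should one exist) one has $a\in I_m^*:=I_{m,j(m)}^{(m)}$ for all $m$ large. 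Since $n_m^{(m)}\ge m>N$ eventually, $B_N(g)\cap A_{n_m^{(m)}}(g)=\emptyset$, whence $|B_N(g)\cap I_m^*|/|I_m^*|\le 1-c$; but $\diam(I_m^*)\to 0$, so the Lebesgue density theorem forces this ratio to tend to $1$, a contradiction. Therefore $|B_N(g)|=0$ for every $N$, so a.e.\ $a$ has $g_a$ exhibiting $\mathscr{P}$ at infinitely many periods, and the geometric independence of Remark~\ref{rem:geometrically-independence} upgrades this to coexistence of infinitely many distinct non-hyperbolic strange attractors.

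The step I expect to be most delicate is the bookkeeping in the inductive density construction: one must pick $\alpha_0$ a priori small enough to remain admissible as the parameter $\alpha$ of Proposition~\ref{main-lema} for every $f_\ell\in\mathcal{D}$ visited along the induction, and simultaneously keep the successive perturbations $\epsilon/2^{\ell+1}$ below the openness radii of all H\'enon-like conditions already installed. This is resolved by working in a small fixed $C^{d,r}$-neighborhood of the original $g$ on which $\alpha_0(\cdot)$ admits a uniform positive lower bound and by shrinking all scales geometrically so that the telescoping errors remain controlled.
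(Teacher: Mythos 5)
Your construction of the open dense sets $\mathcal{O}_m$ is essentially the paper's: the same induction on $m$ via Proposition~\ref{main-lema}, the same use of the openness of the $\rho$-$C^s$-H\'enon-like property to stabilize previously installed levels, and the same geometric scales $\alpha_\ell=2^{-\ell}\alpha_0$. Where you genuinely diverge is in the measure-theoretic endgame. The paper fixes an arbitrary positive-measure set $A$, picks a density point, extracts a subcollection of the balls $I_{\ell,j}$ filling a fixed proportion of a small ball around that point, deduces $\sum_\ell|A\cap J^*_\ell|=\infty$, and then invokes a generalization of the second Borel--Cantelli lemma (Shuster) to conclude that $\limsup_\ell J^*_\ell$ has full measure. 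You instead argue by contradiction on the sets $B_N$ of parameters seeing no phenomenon at period $>N$: the first Borel--Cantelli lemma (using $\sum_m\alpha_m<\infty$) puts a.e.\ point inside some ball $I^*_m$ of the $m$-th collection for all large $m$, prevalence caps the density of $B_N$ in $I^*_m$ at $1-c$, and the Lebesgue density theorem for the regular family of shrinking cubes $I^*_m$ forces that density to $1$. This is a clean and somewhat more elementary route that avoids the generalized Borel--Cantelli lemma entirely, and it is correct provided you note (as you implicitly need) that the cubes $I^*_m$ containing the density point have bounded eccentricity with respect to balls centered at it.

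One point where you cut a corner that the paper does not: you set $\mathcal{R}=\bigcap_m\mathcal{O}_m(\rho_0)$ for a single fixed $\rho_0$ and assert that Remark~\ref{rem0} then supplies the uniform prevalence constant $c$. But the paper explicitly warns that prevalence of non-hyperbolic strange attractors is \emph{not} guaranteed on a full fixed $C^s$-ball around the parabola family; conditions (QL) and (SD) of Viana hold only on an open set of perturbations arising from the renormalization scheme, and the uniform lower bound $c$ is obtained only for renormalized families converging to $\Phi$. This is why the paper takes $\mathcal{R}=\bigcap_{m,\ell}\mathcal{O}_m(\rho_\ell)$ with $\rho_\ell\to 0^+$ and runs a diagonal argument to extract, for each $g\in\mathcal{R}$, a single increasing sequence of periods $n_\ell$ along which $g$ is $\rho_\ell$-$C^s$-H\'enon-like with $\rho_\ell\to 0$, so that the prevalence estimate~\eqref{eq:c} applies for all $\ell$ large. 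Your argument goes through verbatim once you replace the single $\rho_0$ by this double intersection; as written, the claim that a fixed $\rho_0$ suffices is not justified by the results you are quoting.
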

\begin{proof}  First of all consider
the sequence $\epsilon_i=1/i$ for $i\geq 1$. We will prove the
result by induction. To do this, we are going to construct
$\mathcal{O}_m$ for $m=1$.

By Proposition~\ref{main-lema}, for each $f=(f_a)_a$ in $\mathcal{D}$ one finds $\alpha_0=\alpha_0(f)>0$ such that for $\alpha_1=2^{-1}\alpha_0$ and every
$\rho>0$, there are $n_0(i)=n_0(\epsilon_i,\rho,f,\alpha_1)\in \mathbb{N}$
and a finite collection $\{I_{1,j}\}_j$ of pairwise disjoint open
balls $I_{1,j}=I_{1,j}(f,\alpha_1)$ of $\mathbb{I}^k$ with $|I_{1,j}|\leq\alpha_1 |\mathbb{I}^k|$, $|\mathbb{I}^k\setminus \cup_j I_{1,j}|\leq\alpha_1|\mathbb{I}^k|$ and the
following property: For any $n \geq n_0(i)$, we get an $\epsilon_i$-close
family $g_i=(g_{i,a})_a$ to $f$ such that $g_i$ is a
$\rho$-$C^s$-H\'enon-like family after renormalization of period
$n$ in any  $I_{1,j}$ for all $i\geq 1$. Since this property
persists under perturbations, we have a sequence
$\{\mathcal{O}_1(f,\epsilon_i,\rho)\}_i$ of open sets
$\mathcal{O}_1(f,\epsilon_i,\rho)$ converging to $f$ where the
same conclusion holds for any family in these open sets. By taking
the union of all these open sets for any $f$ in $\mathcal{D}$ and
$\epsilon_i>0$ for $i\geq 1$, we get an open and dense set
$\mathcal{O}_1=\mathcal{O}_1(\rho)$ in $\mathcal{U}$ where for any
$g=(g_a)_a\in \mathcal{O}_1$  there exist $\alpha_0>0$,  $n_1\in \mathbb{N}$ and a finite collection $\{I_{1,j}\}_j$ of pairwise disjoint open balls $I_{1,j}$ of $\mathbb{I}^k$ with
$|I_{1,j}| \leq \alpha_1 |\mathbb{I}^k|$ and $|\mathbb{I}^k\setminus \cup_j I_{1,j}| \leq \alpha_1 |\mathbb{I}^k|$ where $\alpha_1=2^{-1}\alpha_0$  such that $g$
is a $\rho$-$C^s$-H\'enon-like family after renormalization of
period $n_1$ in any $I_{1,j}$.

Now we will assume $\mathcal{O}_m=\mathcal{O}_m(\rho)$ constructed
and we will show how to obtain $\mathcal{O}_{m+1}$. Since
$\mathcal{O}_m$ is an open and dense set in $\mathcal{U}$, we can
start by taking $f=(f_a)_a \in \mathcal{O}_m\cap \mathcal{D}$.
Hence, there is $\alpha_0=\alpha_0(f)>0$ such that for each $\ell=1,\dots,m$,
there is a positive integer $n_\ell$ (with
$n_1<\dots<n_m$) and a finite collection $\{I_{\ell,j}\}_j$ of pairwise disjoint open balls $I_{\ell,j}$ of $\mathbb{I}^k$ with
$|I_{\ell,j}| \leq \alpha_\ell |\mathbb{I}^k|$ and $|\mathbb{I}^k\setminus \cup_j I_{\ell,j}| \leq \alpha_\ell |\mathbb{I}^k|$ where $\alpha_\ell=2^{-\ell}\alpha_0>0$,
satisfying that $f$ is a $\rho$-$C^s$-H\'enon-like family after
renormalization of period $n_\ell$ in any $I_{\ell,j}$. As before,
from the robustness of this property, there exists
$\epsilon'=\epsilon'(f)>0$ such that any $\epsilon'$-close family
$g=(g_a)_a$ to $f$ still is a $\rho$-$C^s$-H\'enon-like family
after renormalization with respect to the same periods and in the
same open balls. Then, for $\alpha_{m+1}=2^{-(m+1)}\alpha_0$ and any $\epsilon_i<\epsilon'/2$, we can
apply Proposition~\ref{main-lema} finding
$n_0(i)=n_0(\epsilon_i,\rho,f,\alpha_{m+1})\in \mathbb{N}$ and a finite collection
$\{I_{m+1,j}\}_j$ of pairwise disjoint open balls $I_{m+1,j}=I_{m+1,j}(f,\alpha_{m+1})$ of $\mathbb{I}^k$ with $|I_{m+1,j}|\leq\alpha_{m+1} |\mathbb{I}^k|$ and $|\mathbb{I}^k\setminus \cup_j I_{m+1,j}| \leq \alpha_{m+1} |\mathbb{I}^k|$.
Moreover, by taking an integer $n_{m+1}>\max\{n_0(i),n_m\}$, we
get an $\epsilon_i$-perturbation $g_i=(g_{i,a})_a$ of $f$ such
that $g_i$ is a $\rho$-$C^s$-H\'enon-like family after
renormalization of period $n_{m+1}$ in any $I_{m+1,j}$ and $i\geq
1$. Hence, by the robustness as before, we have a sequence
$\{\mathcal{O}_{m+1}(f,\epsilon_i,\rho)\}_i$ of open sets
$\mathcal{O}_{m+1}(f,\epsilon_i,\rho)\subset \mathcal{O}_m$
converging to $f$ where the same conclusion holds for any family
in these open sets. Taking the union of all these open sets for
any $f\in \mathcal{O}_m\cap \mathcal{D}$ and
$\epsilon_i<\epsilon'(f)$, we get an open and dense set
$\mathcal{O}_{m+1}=\mathcal{O}_{m+1}(\rho)$ in $\mathcal{U}$. In
addition, for any
$g=(g_a)_a\in \mathcal{O}_{m+1}$  there exist $\alpha_0>0$,  positive integers $n_1<\dots <n_{m+1}$  and, for each $\ell=1,\dots,m+1$, a finite collection $\{I_{\ell,j}\}_j$ of pairwise disjoint open balls $I_{\ell,j}$ of $\mathbb{I}^k$ with
$|I_{\ell,j}| \leq \alpha_\ell |\mathbb{I}^k|$ and $|\mathbb{I}^k\setminus \cup_j I_{\ell,j}| \leq \alpha_\ell |\mathbb{I}^k|$ where $\alpha_\ell=2^{-\ell}\alpha_0$   such that $g$
is a $\rho$-$C^s$-H\'enon-like family after renormalization of
period $n_\ell$ in any~$I_{\ell,j}$.

To conclude the proof of the theorem, we need to prove that the
coexistence of infinitely many phenomena $\mathscr{P}$ is
Kolmogorov typical in $\mathcal{U}$. To do this, let $0<c\leq 1$ be the constant that appears {in Remark~\ref{rem0}.}
Consider now the residual
set of $\mathcal{R}$ of $\mathcal{U}$ given by the intersection of
{$\mathcal{O}_{m,\ell}=\mathcal{O}_m(\rho_\ell)$ for all $m,\ell \in\mathbb{N}$ where $\rho_\ell\to 0^+$ as $\ell\to \infty$.}
Hence, any $g=(g_a)_a\in \mathcal{R}$ belongs to $\mathcal{O}_{m,\ell}$
for all $m,\ell\in \mathbb{N}$. Thus,  we find $\alpha_0=\alpha_0(g)>0$,  {and by a diagonal argument}, a strictly increasing
sequence of positive integers $(n_\ell)_\ell$ and collections
$
\{I_{\ell,j}\}_j$ of finitely many pairwise disjoint open balls $I_{\ell,j}$ in $\mathbb{I}^k$
with $|I_{\ell,j}| \leq \alpha_\ell |\mathbb{I}^k|$ and $|\mathbb{I}^k\setminus \cup_j I_{\ell,j}| \leq \alpha_\ell |\mathbb{I}^k|$ where $\alpha_\ell=2^{-\ell}\alpha_0$
such that $g$ is {$\rho_\ell$}-$C^s$-H\'enon-like renormalizable after
period $n_\ell$ in any $I_{\ell,j}$ for all $\ell\geq 1$. Let
$J^*_{\ell,j} \subset I_{\ell,j}$ be the set  of parameters where
a phenomenon $\mathscr{P}$ holds after renormalization of period
$n_\ell$. Notice that since $\mathscr{P}$ is $c$-prevalent
phenomenon for  {$C^s$-H\'enon-like families after renormalization}, we obtain  that
$J^*_{\ell,j}$  has at least Lebesgue measure
\begin{equation}\label{eq:c}
  |J^*_{\ell,j}|\geq c\cdot |I_{\ell,j}| \ \ \text{ for all $\ell$ {large enough}}.
\end{equation}
Let $A$ be a measurable set in $\mathbb{I}^k$ with $|A|>0$. By Lebesgue's density theorem, we have a density point $a_0\in A$. That is, $$\lim_{\varepsilon\to 0} \frac{|A\cap B_\varepsilon(a_0)|}{|B_{\varepsilon}(a_0)|}=1$$
where $B_\varepsilon(a_0)$ denotes the open ball in $\mathbb{I}^k$ centered at $a_0$ with radius $\varepsilon$ (in the usual norm in~$\mathbb{R}^k$). Hence, for a given $\delta>0$, there is $\varepsilon>0$ such that
\begin{equation}\label{eq:AcapB}
  |A\cap B_{\varepsilon}({a_0})| \geq (1-\delta) |B_\varepsilon(a_0)|. 
\end{equation}
Since  $|I_{\ell,j}| \leq \alpha_\ell |\mathbb{I}^k|  \to 0$  and $|\mathbb{I}^k\setminus \cup_j I_{\ell,j}|  \leq\alpha_\ell |\mathbb{I}^k| \to 0$ as $\ell \to \infty$, for any $\ell$ large enough, we can extract a subcollection $\{I_{\ell,j_i}\}_i$ of
$\{I_{\ell,j}\}_j$ with $I_{\ell,j_i} \subset B_\varepsilon(a_0)$ (pairwise disjoints) and $|\cup_i I_{\ell,j_i}|\geq |B_\varepsilon(a_0)|/3$. Thus, by~\eqref{eq:c}
\begin{equation}\label{eq:JcapB}
  |\cup_i J^*_{\ell,j_i}|\geq c\,|\cup_i I_{\ell,j_i}|\geq \frac{c}{3} |B_\varepsilon(a_0)|.
\end{equation}
Hence,~\eqref{eq:AcapB} and~\eqref{eq:JcapB} imply\footnote{If $|A\cap B|\geq \Delta |B|$ and $|J|\geq C |B|$ with $J\subset B$, then
$$|A\cap J|=|(A\cap B)\cap (J\cap B)|=|A\cap B| + |J\cap B| - |(A\cap B)\cup (J\cap B)| \geq \Delta |B| + C|B| - |B| =(\Delta+C-1)|B|.$$}
that $$|\cup_i J^*_{\ell,j_i} \cap A| \geq (1-\delta + c/3 -1) |B_\varepsilon(a_0)| \geq \frac{c}{6} |B_\varepsilon(a_0)| \quad \text{for all $\ell$ large enough}$$
if $\delta>0$ is taken close enough to $0$. In particular, denoting by
$$
 J^*_\ell = \{ a\in \mathbb{I}^k: \, g_a \ \text{has a phenomenon
 $\mathscr{P}$ after renormalization of period $n_\ell$}\, \}
$$
and having into account that $J^*_{\ell,j}\subset J^*_\ell$ we get $\varepsilon>0$ such that $|J^*_\ell \cap A| \geq c |B_{\varepsilon}(a_0)|/6$  for every $\ell$ large enough. This implies that
$$\sum_{\ell \geq 1} |A\cap J^*_\ell|=\infty  \quad \text{for all measurable set $A$ with $|A|>0$}.$$
By the generalization of the second Borel-Cantelli lemma in~\cite[Thm.~1(a)]{shuster1970borel},
the set of event that occurs for infinitely many~$\ell$,
that is,
$$
     J^*=\bigcap_{n\geq 1} \bigcup_{\ell\geq n} J^*_{\ell}
$$
has full Lebesgue measure in $\mathbb{I}^k$.
In particular, and since the sequence of period $n_\ell$ is strictly increasing, by~Remark~\ref{rem:geometrically-independence}, for
any $a\in J^*$ the map $g_a$ has coexistence of infinitely many
different phenomena $\mathscr{P}$. This concludes the proof of the
theorem.
\end{proof}

Now we will prove Proposition~\ref{main-lema}. To do this, we need
the following lemma.

\begin{lem} \label{lema2}
Given $\alpha>0$ let $g=(g_a)_a$ be a $C^{d,r}$-family and assume that $g_a$ has a
homoclinic tangency at a point~$Y_a$ (depending $C^d$-continuously
on~$a$) associated with a sectional dissipative saddle $Q_a$ for
any parameter $a\in a_0+(-\alpha,\alpha)^k$. Then, for any
$\rho>0$ and $\kappa>1$, there exists a sequence of families $g_n=(g_{na})_a$
{approaching} $g$ in the $C^{d,r}$-topology such that $g_{na}=g_a$ if
$a\not\in a_0 + (-\kappa\alpha,\kappa\alpha)^k$ and $g_n$ is a
$\rho$-$C^s$-H\'enon-like family after renormalization of period
$n$ in $a_{0} + (-\alpha,\alpha)^k$ for $n$ large enough.
\end{lem}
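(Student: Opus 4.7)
The plan is to prove the lemma by combining a one-parameter unfolding of the tangency, the rescaling/renormalization machinery from \cite{GST08}, and a cutoff construction localized in parameter space.

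\textbf{Step 1: introducing an extra unfolding parameter.} For each fixed $a \in a_0+(-\kappa\alpha,\kappa\alpha)^k$, the diffeomorphism $g_a$ has a homoclinic tangency $Y_a$ associated with the sectional dissipative saddle $Q_a$. I would extend the family to a $(k+1)$-parameter family $\tilde g=(g_{a,\mu})_{a,\mu}$ with $g_{a,0}=g_a$, where $\mu$ generically unfolds the tangency $Y_a$ (for instance, by composing $g_a$ with a translation of size $\mu$ in the direction transversal to the tangency, localized in a small neighborhood of $Y_a$). The non-degeneracy of the unfolding in the $\mu$-direction can be arranged to hold uniformly for $a$ in the compact set $a_0+[-\kappa\alpha,\kappa\alpha]^k$.

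\textbf{Step 2: renormalization along a curve.} Using the rescaling lemmas of Gonchenko--Shilnikov--Turaev (see \cite{GST08}, also used in Ro95 so that smooth linearization near $Q_a$ is not required), the first return map $T_n = g_{a,\mu}^n$ associated with the tangency, restricted to a suitable box $\sigma_n=\sigma_n(a,\mu)$, admits a renormalization $R_{a,\mu}^{-1}\circ g_{a,\mu}^n \circ R_{a,\mu}$ that, for an appropriate rescaled parameter $M = M_n(a,\mu)$, converges in the $C^s$-topology to the parabola family $\Phi_M$ of \eqref{eq:parabola-family} as $n\to\infty$, provided $a$ lies in a compact set and $\mu$ is chosen on a specific curve shrinking to $\mu=0$ of order $\lambda^{2n}$ (where $\lambda$ is the dominant eigenvalue of $DQ_a$). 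The sectional dissipativeness of $Q_a$ is what makes the limiting family $1$-dimensional (parabola) rather than H\'enon. Solving $M_n(a,\mu)=$ (rescaled target) for $\mu$ yields a $C^d$-function $\mu=\mu_n(a)$ defined on $a_0+(-\alpha,\alpha)^k$ with $\mu_n(a)\to 0$ in every $C^d$-norm as $n\to\infty$; along this curve, $(g_{a,\mu_n(a)})_a$ is $\rho$-$C^s$-H\'enon-like after renormalization of period $n$, for all $n$ sufficiently large.

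\textbf{Step 3: cutoff to keep $g$ unchanged outside the $\kappa\alpha$-box.} Let $\chi\colon \mathbb{R}^k\to [0,1]$ be a smooth bump equal to $1$ on $[-\alpha,\alpha]^k$ and supported in $(-\kappa\alpha,\kappa\alpha)^k$. Define
\[
g_{n,a} \eqdef g_{a,\, \chi(a-a_0)\,\mu_n(a)}.
\]
Then $g_{n,a}=g_a$ outside $a_0+(-\kappa\alpha,\kappa\alpha)^k$, and $g_{n,a}=g_{a,\mu_n(a)}$ on $a_0+(-\alpha,\alpha)^k$, so by Step 2 the family $g_n$ is $\rho$-$C^s$-H\'enon-like after renormalization of period $n$ in that smaller box. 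Because $\mu_n(a)\to 0$ in $C^d$ and $\tilde g$ is $C^{d,r}$ jointly in $(a,\mu,x)$, the $C^{d,r}$-distance from $g_n$ to $g$ is bounded by a constant (depending on $\chi$, $\kappa$, $\alpha$, and derivatives of $\tilde g$ in $\mu$) times $\|\mu_n\|_{C^d}$, which tends to $0$. Hence $g_n\to g$ in the $C^{d,r}$-topology.

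\textbf{Main obstacle.} The delicate point is Step 2: one must produce a curve $\mu=\mu_n(a)$ in the augmented parameter space along which the renormalized family genuinely converges to the \emph{entire} parabola family $(\Phi_M)_{M\in[1,2]}$, not merely to a single value. This requires that, for $a$ ranging in $a_0+(-\alpha,\alpha)^k$, the reparametrization $b\mapsto M_n(a,b)$ produced by GST rescaling sweeps through a neighborhood of $[1,2]$ and can be inverted with $C^d$-control uniformly in $a$. This is where the sectional dissipativeness of $Q_a$ is used crucially (to guarantee the correct form of the limiting family), and where the assumption $d<r-1$ enters, since the rescaling incurs losses of derivatives and only finitely many $a$-derivatives of the renormalization can be controlled. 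A secondary but routine obstacle is keeping the cutoff construction of Step 3 compatible with the $C^{d,r}$-norm, which just requires that $\mu_n(a)$ together with its $a$-derivatives up to order $d$ decay as $n\to\infty$; this follows from the explicit scale $\mu\sim \lambda^{2n}$ in the renormalization.
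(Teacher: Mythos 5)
Your plan follows the paper's proof essentially step for step: the same localized one-parameter unfolding $g_{a,\mu}=H_{a,\mu}\circ g_a$ with the cutoff in the $a$-variable built in, the same parametrized Gonchenko--Shilnikov--Turaev rescaling giving $M\sim\gamma_a^{2n}(\mu+O(\gamma_a^{-n}))$, the same inversion of $\mu\mapsto M_{n,a}(\mu)$ to produce a curve $\mu_n(a)$ along which the renormalized family sweeps the whole parabola family for $M\in[1,2]$, and the same conclusion via $\|\mu_n\|_{C^d}\to 0$. The one substantive point you assert rather than prove is the $C^d$-decay of $\mu_n$ together with its $a$-derivatives --- the paper establishes $\partial_a^\ell\mu_n(a)=O(n^{|\ell|}\gamma_a^{-n})$ by induction on $|\ell|$, which requires some care since $a$-derivatives of $\gamma_a^{2n}$ grow like $n\gamma_a^{2n}$ --- but you correctly flag this as the remaining estimate, so the proposal is sound.
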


Before proving this result, let us show how to obtain
Proposition~\ref{main-lema} from~the~above~lemma:

\begin{proof}[Proof of Proposition~\ref{main-lema}]
Consider a family $f=(f_a)_a$ in $\mathcal{D}$. Definition~\ref{def:Berger-domain} provides an open cover of $\mathbb{I}^k$ where $f$ has a persistent homoclinic tangency in each open set of parameters in this cover. Let $L>0$ be a Lebesgue number\footnote{For every open cover $U$ of a compact metric space $X$ there is a positive real number $L$, called a Lebesgue number, such that every subset of $X$ of diameter less than $L$ is contained in some element of $U$.} of this open cover.
Fix $\epsilon>0$ and $\rho>0$ and $0<\alpha \leq \alpha_0=L/2k$. Choose in $\mathbb{I}^k$ finitely many pairwise disjoint open balls of the form $I'_j=a_{j} + (-\kappa_j\alpha,\kappa_j\alpha)^k$ for some $a_j\in \mathbb{I}^k$, $1<\kappa_j\leq (1+\alpha)^{1/k}$ such that the union of these balls is {of full measure} in $\mathbb{I}^k$.
Set $I_j=a_j+(-\alpha,\alpha)^k$. Clearly $|I_j|=\alpha^k|\mathbb{I}^k|\leq\alpha |\mathbb{I}^k|$ (for any $0<\alpha<1$). On the other hand, since $
  |\mathbb{I}^k|=|\cup_j I'_j| \leq (1+\alpha) |\cup_j I_j|$,
we have
$$
 |\mathbb{I}^k\setminus \cup_j I_j| =|\mathbb{I}^k|-|\cup_j I_j|\leq (1-\frac{1}{1+\alpha})|\mathbb{I}^k| \leq \alpha   |\mathbb{I}^k|.
$$
Note that since $\alpha$ is less than $L/2k$, we have that the diameter of any $I_j=a_j+(-\alpha,\alpha)^k$ is smaller than $L$ and thus is contained in one of the open balls of the covering.  Consequently,  $f$ has a persistent homoclinic tangency in $I_j$. That is, $f_a$ has a
homoclinic tangency at a point~$Y_a$ (depending $C^d$-continuously
on~$a$) associated with a sectional dissipative saddle $Q_a$ for
any parameter $a\in I_j$. Taking into
account that $\{I'_j\}_j$ are pairwise
disjoint sets, we can apply Lemma~\ref{lema2} in each of these sets of parameters.  Thus, for any
$n\in\mathbb{N}$ large enough
(depending on $\epsilon$, $\rho$, $f$ and $\alpha$)
we get
an
$\epsilon$-perturbation $g_n=(g_{na})_a$ of $f$ in the
$C^{d,r}$-topology which is a $\rho$-$C^s$-H\'enon-like family
after renormalization of period $n$ in any
$I_j$. This concludes the
proof.
\end{proof}

Finally, to complete the proof, we will prove Lemma~\ref{lema2}.

\begin{proof}[Proof of Lemma~\ref{lema2}]
By assumption, the map $g_a$  has a homoclinic tangency at a point
$Y_a$ associated with a sectional dissipative periodic point $Q_a$
for all $a\in a_0+(-\alpha,\alpha)^k$. Actually, the tangency must
be smoothly continued until $\|a-a_0\|_{\infty}=\alpha$. By means
of an arbitrarily small $C^{d,r}$-perturbation of the family
around the tangency point, we can assume that the tangency $Y_a$
is simple in the sense of~\cite{GST08}. That is, the tangency is
quadratic, of codimension one, and, in the case of dimension
$m>3$, any extended unstable manifold is transverse to the leaf of
the strong stable foliation that passes through the tangency
point. Since $Q_a$ is a sectional dissipative saddle, if we denote
the leading multipliers of this periodic point by $\lambda_a$ and
$\gamma_a\in \mathbb{R}$ we have
\begin{equation*}
      |\lambda_a|<1< |\gamma_a|  \quad \text{and}  \quad |\lambda^{}_a
      \gamma_a|<1.
\end{equation*}

On the other hand, we can consider a generic one-parameter
unfolding $g_{a,\mu}$ of the homoclinic tangency of $g_a$. To be
more specific, we consider the one-parameter unfolding $g_{a,\mu}$
of $g_a$ where $\mu$ is the parameter that controls the splitting
of the tangency. We can take local coordinates $(x,y)$  with $x\in
\mathbb{R}^{m-1}$ and $y\in \mathbb{R}$ in a neighborhood of $Q_a$
which corresponds to the origin such that $W^s_{loc}(Q_a)$ and
$W^u_{loc}(Q_a)$ acquire the form $\{y=0\}$ and $\{x=0\}$
respectively. Moreover,  by considering, if necessary,
iterated, the tangency point $Y_a$ is represented by $(x^+,0)$ in
these coordinates. Let us consider a $C^\infty$-bump function $
\phi:\mathbb{R}\to \mathbb{R}$ with support in $[-1,1]$ and equal
to 1 on $[-1/\kappa,1/\kappa]$. Let $$\varphi: a=(a_1,\dots,a_k) \in
\mathbb{I}^k \mapsto \phi(a_1)\cdot\ldots\cdot \phi(a_k) \in
\mathbb{R}.$$ Take $\delta>0$ such that the
$2\delta$-neighborhoods in local coordinates of $Y_a$, $g_a(Y_a)$
and $g_a^{-1}(Y_a)$ are disjoint. In particular, we denote by $U$
the $2\delta$-neighborhood of $Y_a$ in these local coordinates.
We  write
$$g_{a,\mu}= H_{a,\mu}
\circ g_a$$ where $H_{a,{\mu}}$  in this local coordinates takes
the form
\begin{align*}
   \bar{x}&= x
   \\
   \bar{y}&=y+\varphi\left(\frac{a-a_0}{\kappa\alpha}\right)\phi\left(\frac{\|(x,y)-(x^+,0)\|}{2\delta}\right)\,\mu
\end{align*}
and it is the identity otherwise. Observe that if $a\not\in a_0 +
(-\kappa\alpha,\kappa\alpha)^k$ then $g_{a,\mu}=g_a$. Also, if $(x,y)\not \in
g_a^{-1}(U)$ then $g_{a,\mu}=g_a$.

Let us define the first return map associated with the simple
homoclinic tangency of $g_{a,\mu}$ at $\mu=0$ (see~\cite[Sec.~1,
p.~928]{GST08}). As usual, $T_0=T_0(a,\mu)$ denotes the local map
for $a\in a_0 + [-\alpha,\alpha]^k$. In our case, since we are assuming that $Q_a$ is a fixed point, $T_0$
corresponds to $g_{a,\mu}$ at a neighborhood of this fixed point. By $T_1=T_1(a,\mu)$ we denote the map
$g_{a,\mu}^{k_1}$ from a neighborhood $\Pi^-_a$ of a tangent point
$Y^-_a\in W^u_{loc}(Q_a)$  to a neighborhood $\Pi_a$ of
$Y_a=g_a^{k_1}(Y^-_a)\in W^s_{loc}(Q_a)$. Then, one defines the
first-return map as $T_n=T_1\circ T_0^n$ for sufficiently large
$n$ on $\sigma_n(a)=\Pi_a \cap T_0^{-n}(\Pi^-_a)$. Since the
tangency point $Y_a$ depends $C^d$-continuously on
$a_0+[-\alpha,\alpha]^k$, we find that this first-return map
$T_n=T_n(a,\mu)$ also depends smoothly as a function of the
parameter $a$ on $a_0+[-\alpha,\alpha]^k$.

\begin{lem}[Parametrized rescaling lemma] \label{lem.new}
There are a sequence of open sets $\Delta_n$  in the
\mbox{$(a,\mu)$-parameter} space with $\overline{\Delta_n}$
accumulating on $(a_0+[-\alpha,\alpha]^k)\times \{0\}$ such that
for any $(a,\mu)\in \overline{\Delta_n}$ there is
 a smooth transformation of coordinates which
brings the first-return map $T_n$ in local coordinates on
$\sigma_n(a)$ the following form:
\begin{equation*} 
   \bar{x}=o(1)  \quad \text{and} \quad  \bar{y}=M-y^2+ o(1)
\end{equation*}
where the $o(1)$-terms tends to zero (uniformly on $a$) as $n\to
\infty$ along with all the derivatives up to the order $r$ with
respect to the coordinates $(x,y)$ and up to $d\leq r-2$ with
respect to the rescaled parameter~$M$. The domain of definition of
$T_n$ in these coordinates is an asymptotically large region
that, as $n\to \infty$, covers all finite values of $(x,y)$. The
rescaled parameter $M$ is at least $C^d$-smooth function of
$(a,\mu)$ which for large enough $n$ is given by
\begin{align}  \label{eq:MB} M \sim
\gamma^{2n}_a(\mu + O(\gamma_a^{-n})).
\end{align}
\end{lem}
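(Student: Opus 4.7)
The plan is to upgrade the one-parameter rescaling lemma of Gonchenko-Shilnikov-Turaev (\cite[\S1, p.~928]{GST08}) to a statement that accommodates the additional $k$-dimensional smooth parameter $a$. The key observation is that every geometric ingredient of the GST construction---the sectional-dissipative saddle $Q_a$, its leading stable and unstable multipliers $\lambda_a$ and $\gamma_a$, the local invariant manifolds $W^{s,u}_{\text{loc}}(Q_a)$, the tangency point $Y_a$ and its pre-image $Y^-_a$, and the global transition map $T_1(a,\mu) = g_{a,\mu}^{k_1}$---depends $C^d$-smoothly on $a \in a_0 + [-\alpha,\alpha]^k$, by hypothesis on the family and by the parametrized (un)stable manifold theorem. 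Consequently, the scalar GST procedure can be carried out uniformly in $a$, and all coordinate changes and reparametrizations inherit this $C^d$-dependence on the parameter.

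First I would straighten the invariant manifolds so that $W^s_{\text{loc}}(Q_a) = \{y=0\}$ and $W^u_{\text{loc}}(Q_a) = \{x=0\}$ in $C^d$-smooth local coordinates, and then apply the finite-order GST partial normal form to bring $T_0 = g_{a,\mu}$ near the saddle to a form whose leading part is the linear map $(x,y) \mapsto (\Lambda_a x, \gamma_a y)$ with $\|\Lambda_a\| \leq |\lambda_a| < 1 < |\gamma_a|$. Iterating $n$ times on the strip $\sigma_n(a)$ gives $T_0^n(x,y) = \bigl(O(\lambda_a^n),\ \gamma_a^n(y - y^-/\gamma_a^n) + y^- + O(\gamma_a^{-n})\bigr)$, uniformly in $a$ together with all derivatives up to order $r$. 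Taylor-expanding the global map $T_1(a,\mu)$ around $(0, y^-)$ and using the simplicity of the tangency (quadratic, codimension one, transverse to the strong stable foliation when $m>3$) together with the generic unfolding in $\mu$ yields
\[
  T_1(a,\mu)(u,v) = \bigl(x^+ + O(|u| + |v - y^-| + |\mu|),\ \mu + b_a(v - y^-)^2 + L_a u + \mathrm{h.o.t.}\bigr),
\]
with $b_a \neq 0$ and $b_a$, $L_a$ depending $C^d$-smoothly on $a$.

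Next I would introduce the parametrized rescaling
\[
  x = x^+ + \beta_x X, \qquad y = \frac{y^-}{\gamma_a^n} - \frac{Y}{b_a\gamma_a^{2n}}, \qquad M = -b_a\gamma_a^{2n}\bigl(\mu - \mu_n^*(a)\bigr),
\]
where $\mu_n^*(a) = y^-/\gamma_a^n + O(\gamma_a^{-2n} + \lambda_a^n)$ is chosen to eliminate the lower-order affine corrections coming from the composition, and where $\beta_x$ (together with an affine shift of $X$) is tuned so that the leading terms in the $x$-component cancel. Substituting $T_0^n$ into $T_1$ and expanding in powers of $\gamma_a^{-n}$ and $\lambda_a^n$ then produces the desired form $\bar{X} = o(1)$, $\bar{Y} = M - Y^2 + o(1)$, with remainder estimates showing the $o(1)$-terms are of size $O(\gamma_a^{-n} + \lambda_a^n)$, and the asymptotic $M \sim \gamma_a^{2n}(\mu + O(\gamma_a^{-n}))$ is built into the definition of $M$. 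The open set $\Delta_n$ is then taken to be those $(a,\mu)$ with $M(a,\mu)$ in an open interval slightly larger than $[1,2]$ and $a \in a_0 + (-\alpha,\alpha)^k$; its closure accumulates on $(a_0+[-\alpha,\alpha]^k)\times\{0\}$ because $\mu = \mu_n^*(a) - M/(b_a\gamma_a^{2n}) \to 0$ uniformly. In the rescaled coordinates $\sigma_n(a)$ pulls back to a region of size $O(\gamma_a^n)$, which exhausts every fixed compact set of $(X,Y)$ as $n \to \infty$.

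The main technical obstacle is verifying the convergence $o(1) \to 0$ in the $C^{d,r}$-sense and accounting for the loss of exactly two derivatives passing from the $(a,\mu)$-direction to the $M$-direction. Spatial $C^r$-decay follows directly from uniform $C^r$-bounds on $T_1\circ T_0^n$ together with the fact that each $\partial_{X,Y}$-derivative of the rescaled map picks up only a bounded factor against a $\partial_{x,y}$-derivative of the original. In the $M$-direction one has $\partial_M\mu = -(b_a\gamma_a^{2n})^{-1}$, while each $\partial_a$ performed at $M$ fixed introduces a compensating $\mu$-shift whose coefficient $\partial_a\mu|_M$ involves $n\gamma_a^{2n}\partial_a\log\gamma_a\cdot(\mu - \mu_n^*)$; iterating, a mixed derivative of order $d$ in $(a,M)$ generates a polynomial of degree at most $d$ in $n$ multiplied by $(a,\mu)$-derivatives of $T_1\circ T_0^n$ of the same order. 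Absorbing these polynomial-in-$n$ factors into the geometric decay $\gamma_a^{-n}$ requires two spare derivatives of spatial regularity in order to extract additional $\gamma_a^{-n}$-decay via Taylor remainders on the normal form, which is precisely the content of the hypothesis $d \leq r-2$. Once this bookkeeping is in place---mirroring the scalar computation in~\cite{GST08}---the stated convergence and the lemma follow.
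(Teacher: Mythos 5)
Your proposal is correct and follows essentially the same route as the paper: the published proof simply invokes \cite[Lemma~1]{GST08} and observes that the GST coordinate changes (their Eqs.~(3.4) and (3.7)) can be carried out smoothly in the extra parameter $a$, with the $O$-constants made uniform by compactness of $a_0+[-\alpha,\alpha]^k$ and continuity of the coefficients. You have in effect unwound that citation and redone the GST rescaling with the parameter carried along --- including the source of the $d\leq r-2$ loss --- which is exactly the intended argument.
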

\begin{proof}  {This result follows from~\cite[Lemma~1]{GST08}. To see this, let us analyze the proof of the rescaling lemma in~\cite{GST08} for the sectional dissipative case $(1,1)$. We observe that we can  perform the proof line by line and the transformation of coordinates~\cite[Eq.~(3.4) and (3.7)]{GST08} can be done smoothly on the parameter $a\in a_0+[-\alpha,\alpha]^k$. On the other hand, the constants on the $O$-terms will depend on the parameter $a$ but these can be uniformly  bounded due to the compactness of the parameter space $a_0+[-\alpha,\alpha]^k$ and the continuity of all the coefficients with respect to $a$.}
%
%
%
\end{proof}
Notice that the parameter $M$ in~\eqref{eq:MB} can take arbitrarily finite values when $\mu$ varies
close to $\mu^0_n(a)=O(\gamma^{-n}_a)$. To be more precisely, the
parameter $\mu_n^0(a)$ was introduced in~\cite{GST08} so that
$M_{n,a}(\mu^0_n(a))=0$ where $M_{n,a}$ is the function given
in~\eqref{eq:MB} for fixed $n$ and $a$. Actually,  an explicit
expression of $M$ in~\eqref{eq:MB} is provided in~\cite[after
Eq.~(3.8)]{GST08}  which, up to multiplicative constants, is
basically the right hand of~\eqref{eq:MB} where the $O$-function
does not depend on $\mu$ and its $i$-th partial derivatives with
respect to the variable $a$ are of order $O(n^i\gamma^{-n}_a)$.
Thus, we can calculate the derivative with respect to $\mu$ of
$M_{n,a}$ for $n$ large enough as $
 \partial_\mu M_{n,a} \sim \gamma_a^{2n} \gg 1
$. Hence, we obtain that $M_{n,a}$  is an invertible expanding map with
an arbitrarily large uniform expansion on $a\in a_0 +
[-\alpha,\alpha]^k$. Thus, for $n$ large enough, we can assume
that
$$
\Phi_n(a,\mu)=(a, M_{n,a}(\mu))
$$
is a diffeomorphism between the set $\overline{\Delta_n}$ given
above in the lemma and $(a_0+[-\alpha,\alpha]^k)\times [-10,10]$.

Notice that the linear rescaling $b(M)$ given in
Definition~\ref{def:Henon-like-after} takes, on
$a_{0}+[-\alpha,\alpha]^k$, the form
$$
   b(M)=2\alpha M -3\alpha+\pi_k(a_{0}) \quad \text{for} \ \ M\in
    [1,2]
$$
where $\pi_k: \mathbb{R}^k\to \mathbb{R}$ is the projection on the
$k$-th coordinate. Consider the inverse map
$$
   \widehat{M}(b)= \frac{b+3\alpha-\pi_k(a_{0})}{2\alpha}  \quad \text{for} \ \ b\in
   \pi_k\left(a_{0}
   +[-\alpha,-\alpha]^k\right).
$$
Now  since $\Phi_n$ is a diffeomorphism, we find a
$C^{d}$-function $\mu_n$ on $a_0+[-\alpha,\alpha]^k$ defined as
$$
\Phi_n^{-1}\left(a,\widehat{M}(\pi_k(a))\right)=(a,\mu_n(a))
\qquad a\in a_0+[-\alpha,\alpha]^k.
$$
In particular,
\begin{equation} \label{eq2}
\begin{aligned}
  M_{n,a}(\mu_n(a))=\widehat{M}(\pi_k(a)) \quad \text{for} \  \ a\in
  a_{0}+(-\alpha,\alpha)^k.
\end{aligned}
\end{equation}
Extending smoothly $\mu_n$ to $\mathbb{I}^k$
(c.f.~\cite[Lemma~2.26]{lee2012introduction}) we can consider the
sequence of families ${g}_n=({g}_{n,a})_a$ where
$$
   {g}_{n,a}=g_{a,\mu_n(a)} \qquad \text{for  $a\in
   \mathbb{I}^k$ and $n$ large enough}.
$$
Observe that ${g}_{n,a}=g_a$ for $a \not \in a_0 +
(-\kappa\alpha,\kappa\alpha)^k$. Moreover, according to Lemma~\ref{lem.new}
and Equation~\eqref{eq2}, there is
 a smooth family $R=(R_a)_a$ of smooth transformation of coordinates $R_{a}$
 on $\sigma_n(a)$ such that bring the first-return map $T_n=T_n(a,\mu_n(a))$ of $g_{n,a}$
 into $R_a^{-1}\circ T^{}_n
 \circ R^{}_a$ which has the form
$$
   \bar{x}=o(1)  \quad \text{and} \quad  \bar{y}=
   \widehat M(\pi_k(a))-y^2+o(1)  \qquad  \text{for \ } a \in a_0 + [-\alpha,\alpha]^k.
$$
Substituting  the parameter $a$ by the linear rescaling $a(M)=(\bar{a},b(M))\in
a_0 + [-\alpha,\alpha]^k$ and taking into account that
$\widehat{M}(\pi_k(a(M)))=\widehat{M}(b(M))=M$ for all $M\in
[1,2]$ we obtain that $\varphi_n=R^{-1}_{a(M)}\circ T^{}_n(a(M),\mu_n(a(M)))
\circ R^{}_{a(M)}$ 
takes the form
$$
   \bar{x}=o(1)  \quad \text{and} \quad  \bar{y}=M-y^2+ o(1) \quad
   \text{for} \ \ M\in [1,2].
$$
Since the $o(1)$-terms above tend to zero as $n\to \infty$ along
with all derivatives up to the order $r$ with respect to the
coordinates $(x,y)$ and up to $s\leq d\leq r-2$ with respect to
$M$, we get $\|\varphi_n-\Phi\|_{C^{s,s+2}}=o(1)$ where
$\Phi=(\Phi_M)_M$ is the parabola family.  This proves that for
$n$ large enough, $g_n$ is a $\rho$-$C^s$-H\'enon-like family
after renormalization of period $\tilde{n}=n + k_1$ in
$a_0+(-\alpha,\alpha)^k$. For short and simplicity, we can relabel the sequence of families
to simply say that the renormalization period of $g_n$ is $n$.

To conclude the proof of the lemma, we only need to show that
${g}_n$ converges to $g$ in the $C^{d,r}$-topology. To do this,
notice that the $C^{d,r}$-norm
$$
 \|{g}_n-g\|=\|(I-H_{a,\mu_n(a)})\circ g_a \|  \leq
  \|I-H_{a,\mu_n(a)}\| \, \|g\|
$$
where $I$ denotes the identity. Thus, we only need to calculate
the $C^{d,r}$-norm of the family $(I-H_{a,\mu_n(a)})_a$. Since
$H_{a,\mu_n(a)}=I$ if $a\not \in a_0 + (-\kappa\alpha,\kappa\alpha)^k$ or
$(x,y)\not \in U $ then
\begin{align*}
   \left\|I-H_{a,\mu_n(a)}\right\|\leq \left\|\varphi\left(\frac{a-a_0}{\kappa\alpha}\right)\phi\left(\frac{\|(x,y)-(x^+,0)\|}{2\delta}\right)\,\mu_n(a)\right\|.
\end{align*}
Then to estimate the $C^{d,r}$-norm above it suffices to show that
the function
$$
   G_{n,\alpha}(a)  = \varphi(\frac{a-a_0}{\kappa\alpha})\mu_n(a)  \quad
   \text{for $a \in  a_0+(-\kappa\alpha,\kappa\alpha)^k$}
$$
have $C^d$-norm small when $n$ is large. To do this, using the
multi-index notation for partial derivatives and the Leibnitz rule,
\begin{equation} \label{above}
\partial_a^\ell G_{n,\alpha}(a)=\sum_{j\leq \ell} \binom{\ell}{j} \,
\partial_a^{\ell-j} \varphi\left(\frac{a-a_0}{\kappa\alpha}\right) \cdot
\partial_a^j\mu_n(a) \qquad  \ell\in \mathbb{Z}_+^k=(\mathbb{N}\cup \{0\})^k \ \ \text{with} \ \
|\ell|\leq d.
\end{equation}
On the other hand, recall that as we have indicated before, the
$\mu$ variable in~\eqref{eq:MB} varies close to
$\mu_n^0(a)=O(\gamma_a^{-n})$.
Moreover, $|\mu_n(a)-\mu_n^0(a)|\to 0$ as $n\to\infty$. Thus, we
also get $\mu_n(a)=O(\gamma_a^{-n})$.

We will show that
$\partial_a^\ell\mu_n(a)=O(n^{|\ell|}\gamma^{-n}_a)$ for $\ell\in
\mathbb{Z}^k_+$ by induction in $|\ell|$. To do this, assume that
$\partial_a^j\mu_n(a)=O(n^{|j|}\gamma_a^{-n})$ for all
$j\in\mathbb{Z}_+^k$ with $j \leq \ell$ and $|j|<|\ell|$. Notice
that~\eqref{eq2} can be written as
$$
    \widehat M(\pi_k(a)) \sim
    \gamma^{2n}_a(\mu_n(a)+O(\gamma_a^{-n}))
$$
where the equivalence is actually an equality up to multiplicative
constants (independent of~$a$). Moreover, the $i$-th partial derivatives
with respect to the variable $a$ of the above $O$-function is of order
$O(n^i\gamma^{-n}_a)$ for all $i\geq 0$. Thus,  we get that
\begin{align*}
 \partial_a^\ell \widehat M(\pi_k(a))=O(1) &\sim O(n^{|\ell|}\gamma_a^{n}) + \sum_{j\leq
 \ell} \binom{\ell}{j} \, \partial_a^{\ell-j}(\gamma_a^{2n})\cdot
 \partial_a^j\mu_n(a) \\
 &=O(n^{|\ell|}\gamma_a^{n}) + \gamma^{2n}_a \cdot \partial_a^\ell\mu_n(a) +  \sum_{j\leq
 \ell \, j\not =\ell} O(n^{|\ell|-|j|}\gamma_a^{2n}) \cdot
 O(n^{|j|}\gamma_a^{-n}) \\
 &= O(n^{|\ell|}\gamma_a^{n}) + \gamma^{2n}_a \cdot
 \partial_a^\ell\mu_n(a).
\end{align*}
From here it follows that
$\partial_a^\ell\mu_n(a)=O(n^{|\ell|}\gamma_a^{-n})$. Indeed, if
$\partial_a^\ell\mu_n(a)$ is not a $O(n^{|\ell|}\gamma_a^{-n})$,
then negating the definition of $O$-function, for every $K>0$ and
$n_0\in\mathbb{N}$ there exists $n\geq n_0$ such that
$|\partial_a^\ell\mu_n(a)|>K n^{|\ell|}|\gamma_a|^{-n}$. In
particular,
$\partial_a^\ell\mu_n(a)=\Omega(n^{|\ell|}\gamma_a^{-n})$ where
$\Omega$ denotes the Big Omega of Hardy-Littlewood. Hence, we
obtain that $\partial_a^\ell M(\pi_k(a))$, which is a
$O(1)$-function, is also of order $O(n^{|\ell|}\gamma_a^{n}) +
\gamma^{2n}_a \cdot \Omega( n^{|\ell|}\gamma_a^{-n})=
O(n^{|\ell|}\gamma_a^{n}) +  \Omega(
n^{|\ell|}\gamma_a^{n})=\Omega( n^{|\ell|}\gamma_a^{n})$ obtaining
a contradiction.

Substituting the above estimate of $\partial_a^\ell\mu_n(a)$ into~\eqref{above}, we
get that $\partial_a^\ell G_{n,\alpha}(a)= O((\kappa\alpha)^{-d}n^{|\ell|}
\gamma_a^{-n})$. In particular, we get
\begin{equation*}
\|G_{n,\alpha}\|_{C^d}= O\left((\kappa\alpha)^{-d}n^d \gamma^{-n}\right)
\end{equation*}
for some $1<\gamma\leq \gamma_a$ for all $a\in a_0+
(-\alpha,\alpha)^k$. Observe that this assertion completes the
proof of the lemma.
\end{proof}

\subsection*{Acknowledgements}
We thank E.~R.~Pujals for his guidance, and the encouragement he gave us to write this paper providing many ideas to go ahead. The second author
also especially thanks to his supervisor E.~R.~Pujals for his
unconditional friendship and enriching talks on mathematics among
other things during his doctorate. Finally, the first author
thanks A.~Raibekas for his tireless patience and friendship during
many difficult moments throughout the process of writing and
revising the preliminary versions.

The first author was partially supported by CNPq, FAPERJ and
grants MTM2017-87697-P and PID2020-113052GB-I00 funded by MCIN (Spain).
\bibliographystyle{alpha2}
\bibliography{bib}

\end{document}